\newtheorem{lem}{Lemma}
\newtheorem{lemma}[lem]{Lemma}
\newtheorem{thm}{Theorem}
\newtheorem{theorem}[thm]{Theorem}
\newtheorem{ques}{Question}
\def\\{\cr}
\def\({\left(}
\def\){\right)}
\def\[{\left[}
\def\]{\right]}
\def\<{\langle}
\def\>{\rangle}
\def\fl#1{\left\lfloor#1\right\rfloor}
\def\rf#1{\left\lceil#1\right\rceil}
\def\cG{\mathcal G}
\def\F{\mathbb{F}}
\def\Fp{\mathbb{F}_p}
\def\cG{{\mathcal G}}
\def\cH{{\mathcal H}}
\def\cQ{{\mathcal Q}}
\def\cU{{\mathcal U}}
\def\cV{{\mathcal V}}
\def\ord{\mathrm{ord\,}}
\def\mand{\qquad \mbox{and} \qquad}
\begin{document}

\title{\bf On Gaps Between Primitive Roots in the
Hamming Metric}

\author{
{\sc Rainer Dietmann}\\
{Department of Mathematics}\\
{Royal Holloway,
University of London} \\ 
{Egham, Surrey, TW20 0EX, United Kingdom} \\
{\tt Rainer.Dietmann@rhul.ac.uk}
\\
\and
{\sc Christian Elsholtz} \\
{Institute of Analysis and Computational Number Theory} \\
{Technische  Universit\"at  Graz} \\ 
{Steyrergasse 30}\\
{Graz, A-8010, Austria } \\
{\tt elsholtz@math.tugraz.at} \\
\and
{\sc Igor E.~Shparlinski} \\
{Department of Computing, Macquarie University} \\
{Sydney, NSW 2109, Australia} \\
{\tt igor.shparlinski@mq.edu.au} 
}

\pagenumbering{arabic}
\maketitle

\begin{abstract} We consider a modification 
of the classical number theoretic question about the gaps
between consecutive primitive roots modulo a prime $p$,
which by the well-known
result of Burgess are known to be at most $p^{1/4+o(1)}$.
Here we measure the distance in the Hamming metric and show that
if $p$ is a sufficiently large  $r$-bit prime, 
then for 
 any integer $n \in [1,p]$ one can obtain a primitive root modulo $p$
by changing  at most $0.11002786\ldots r$ binary digits of $n$. 
This  is stronger than what can be deduced from the Burgess result. 
Experimentally, the number of necessary bit changes is very small.
We also show that each Hilbert cube contained in the complement
of the primitive roots modulo $p$ has dimension at most $O(p^{1/5+\epsilon})$,
improving on previous results of this kind.
\end{abstract}

\section{Introduction}

Let $p$ be a fixed prime number.  Studying the gaps 
between consecutive quadratic non-residues and primitive roots modulo 
$p$ is a classical number theoretic question, where however 
not much progress has been made since the work of Burgess~\cite{Burg1,Burg2}
that implies that these gaps are at most $p^{1/4+o(1)}$.

Here we consider a modification of this question where the 
distances are measured in the \textit{Hamming metric}
(see for example ~\cite[Section~1.1]{MS}). More specifically,
we denote by $\Delta_p$ the smallest number $s$ such that  
for any integer $n \in [1,p]$ one can change at most $s$ binary 
digits of $n$ in order to get a primitive root modulo $p$. 

We use the ideas of~\cite{OstShp}, which in turn expand on 
those of~\cite{BaCoSh}, to estimate 
character sums over integers that are close in the Hamming 
metric to a given integer $n$  and then derive an 
estimate on $\Delta_p$. As a primitive root is also quadratic non-residue, 
$\Delta_p$ also gives a bound 
on the gaps between quadratic non-residues in the Hamming metric.

Let $\rho_0 = 0.11002786\ldots$ be the unique root of the equation 
\begin{equation}
\label{eq:rho}
H(\rho) = 1/2, \qquad 0 \le \rho \le 1/2, 
\end{equation}
where
$$
H(\gamma) = \frac{- \gamma \log \gamma  -  (1-\gamma) 
\log (1-\gamma)}{\log 2}, \quad 0 < \gamma < 1,
$$
denotes the {\it binary entropy function\/} (see ~\cite[Section~10.11]{MS}).

\begin{theorem}
\label{thm:Deltap} 
 We have $\Delta_p \le (\rho_0 + o(1)) r$
as $p \to \infty$, where  $r$ is the number of 
 binary digits of $p$. \end{theorem}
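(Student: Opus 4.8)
The plan is to count primitive roots inside the Hamming ball around $n$ and to show this ball is nonempty as soon as it is large enough. Write $r=\fl{\log_2 p}+1$ and fix $s=\fl{\rho r}$ with $\rho=\rho_0+\varepsilon$ for an arbitrarily small fixed $\varepsilon>0$. Let $\cN_s(n)$ be the set of integers in $[0,2^r)$ whose binary expansion differs from that of $n$ in at most $s$ positions, so that every element lies within Hamming distance $s$ of $n$ and $|\cN_s(n)|=\sum_{j\le s}\binom{r}{j}$. The standard entropy estimate for binomial sums gives $|\cN_s(n)| = 2^{(H(\rho)+o(1))r}=p^{H(\rho)+o(1)}$, and since $\rho>\rho_0$ and $H$ is increasing on $[0,1/2]$ we get $H(\rho)>1/2$, whence $|\cN_s(n)|\ge p^{1/2+\delta}$ for some fixed $\delta>0$. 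The task reduces to producing a primitive root modulo $p$ among the elements of $\cN_s(n)$.

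Next I detect primitive roots by the usual character indicator: for $(m,p)=1$ the quantity
\[
\frac{\varphi(p-1)}{p-1}\sum_{d\mid p-1}\frac{\mu(d)}{\varphi(d)}\sum_{\ord\chi=d}\chi(m)
\]
equals $1$ if $m$ is a primitive root and $0$ otherwise. Summing over $m\in\cN_s(n)$ and isolating the principal character $d=1$ produces a main term $\tfrac{\varphi(p-1)}{p-1}(|\cN_s(n)|-O(1))$, which, using $\varphi(p-1)/(p-1)\gg 1/\log\log p$, has size $|\cN_s(n)|\,p^{-o(1)}$. The remaining terms are bounded by
\[
\sum_{d\mid p-1}\frac{|\mu(d)|}{\varphi(d)}\sum_{\ord\chi=d}\Bigl|\sum_{m\in\cN_s(n)}\chi(m)\Bigr|
\le 2^{\omega(p-1)}\max_{\chi\ne\chi_0}\Bigl|\sum_{m\in\cN_s(n)}\chi(m)\Bigr|,
\]
and $2^{\omega(p-1)}=p^{o(1)}$. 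So it suffices to prove $\max_{\chi\ne\chi_0}\bigl|\sum_{m\in\cN_s(n)}\chi(m)\bigr|\le |\cN_s(n)|\,p^{-\eta}$ for a fixed $\eta>0$ whenever $|\cN_s(n)|\ge p^{1/2+\delta}$.

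The heart of the matter is this character sum bound, which I would obtain by completion. Writing $m=n\oplus u$ with $u$ of Hamming weight at most $s$ gives $m=n+\sum_i\epsilon_i u_i 2^i$ with $\epsilon_i=1-2n_i\in\{\pm1\}$, so the additive Fourier transform of $\cN_s(n)$ nearly factorises,
\[
\sum_{m\in\cN_s(n)}\ep(am)=\ep(an)\sum_{|S|\le s}\ \prod_{i\in S}\ep(a\epsilon_i 2^i),
\]
and expressing $\chi$ through Gauss sums yields $\bigl|\sum_{m}\chi(m)\bigr|\le p^{-1/2}\sum_{a=1}^{p-1}\bigl|\sum_{m\in\cN_s(n)}\ep(am)\bigr|$. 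Everything thus reduces to estimating the $\ell^1$-mass of these partial elementary-symmetric sums, which is exactly the kind of estimate handled in~\cite{BaCoSh,OstShp}: for most $a$ a positive proportion of the factors $|1+\ep(a\epsilon_i2^i)|=2|\cos(\pi a2^i/p)|$ are bounded away from $2$, forcing the product to be exponentially small, while the constraint $|S|\le s$ is inserted by a generating-function (contour) argument. Adapting that method should give a Pólya--Vinogradov-type bound that beats the trivial one precisely once $|\cN_s(n)|$ exceeds $p^{1/2+o(1)}$, which is where the threshold $H(\rho_0)=1/2$ enters. I expect this Fourier-mass estimate to be the main obstacle, since plain Cauchy--Schwarz with Parseval only gives the useless bound $\sqrt{p\,|\cN_s(n)|}$, so the cancellation must be extracted from the product structure.

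Combining the three steps, the number of $m\in\cN_s(n)$ with $m\bmod p$ a primitive root is at least $|\cN_s(n)|\,p^{-o(1)}\bigl(1-p^{-\eta+o(1)}\bigr)>0$ for all large $p$ (the reduction $[0,2^r)\to\Z_p$ is at most $2$-to-$1$, costing only a bounded factor, and the at most two values with $m\equiv 0$ are discarded). Hence some element of the ball is a primitive root, giving $\Delta_p\le s=\fl{(\rho_0+\varepsilon)r}$ for every fixed $\varepsilon>0$ once $p$ is large, which is the assertion $\Delta_p\le(\rho_0+o(1))r$. It is worth noting that the weaker interval-based Burgess argument only gives $\rho\le 1/4$; the present route wins because the Hamming ball is entropically far larger than its radius, and a Burgess-type analogue for Hamming balls (not currently available) would lower the $1/2$ threshold and improve $\rho_0$.
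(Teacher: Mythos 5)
Your reduction of the theorem to a character sum estimate is sound and matches the paper: the indicator of Lemma~\ref{lem:prim root}, the factor $\varphi(p-1)/(p-1)\gg 1/\log\log p$, the divisor bound $2^{\omega(p-1)}=p^{o(1)}$, and the identification of the threshold $H(\rho_0)=1/2$ all appear in Section~\ref{sec4} as well. But everything then hinges on the bound $\max_{\chi\ne\chi_0}\bigl|\sum_{m\in\cN_s(n)}\chi(m)\bigr|\le \#\cN_s(n)\,p^{-\eta}$, and you do not prove it: you write that the method of~\cite{BaCoSh,OstShp} ``should give'' it and that you ``expect this Fourier-mass estimate to be the main obstacle''. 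That estimate \emph{is} the theorem; leaving it as an expectation is a genuine gap, not a routine omission. Worse, the route you sketch is unlikely to close it. Completion via Gauss sums needs $\sum_{a\ne 0}\bigl|\sum_{m\in\cN_s(n)}\ep(am)\bigr|\ll \#\cN_s(n)\,p^{1/2-\eta}$, i.e.\ the Hamming ball must have nearly minimal Fourier $\ell^1$-mass, like an interval. The product formula you invoke holds only for the unrestricted cube, which is all of $[0,2^r)$ and hence literally an interval --- that is the only reason its Fourier transform has small $\ell^1$-mass. Once the weight constraint $|S|\le s$ is re-inserted by a contour or generating-function device, estimating the resulting averages of products $\prod_i\bigl|1+\tau e^{i\phi}\ep(a\epsilon_i2^i)\bigr|$ by Cauchy--Schwarz and Parseval returns precisely the trivial bound $\sqrt{p\,\#\cN_s(n)}$ that you yourself dismissed. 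Indeed, by Parseval the $\ell^2$-average over $a\ne 0$ of $\bigl|\sum_{m\in\cN_s(n)}\ep(am)\bigr|$ is about $(\#\cN_s(n))^{1/2}$, so your required $\ell^1$ bound asserts that the $\ell^1$-average is smaller than the $\ell^2$-average by a factor of roughly $p^{-1/4}$: an extreme concentration statement, false for generic sets of this size, for which you offer no mechanism; neither~\cite{BaCoSh} nor~\cite{OstShp} proves anything of this shape.

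The paper avoids this obstacle by not working with the full Hamming ball at all. It changes bits only inside a product structure: with $k=\fl{(1-\varepsilon)r}$, $\ell=\fl{\rho k}$, $m=\fl{0.5(r-k)}$, it considers $\cQ(k,\ell,m)=\{u2^{r-k+1}+v:\ u\in\cU_{k,\ell}(n),\ v\in\cV_{k,m}(n)\}$, i.e.\ exactly $\ell$ changes among the top $k$ bits and exactly $m$ among the remaining bits, so that $\ell+m\le(\rho+\varepsilon/2)r$. This bilinear structure is what makes the character sum tractable: Lemma~\ref{lem:Sum general} applies H\"older's inequality in $u$ (enlarging $\cU_{k,\ell}(n)$ to the full interval $[0,2^k)$) and then the Weil bound of Lemma~\ref{lem:Weil-Char} to the complete sums of $\chi\bigl(\prod_{i}(u2^{r-k+1}+v_i)(u2^{r-k+1}+w_i)^{p-2}\bigr)$, with a separate count of the degenerate tuples for which this polynomial is a power of a rational function. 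The threshold then enters exactly where you predicted, but through a different mechanism: the requirement $\#\cU_{k,\ell}(n)=\binom{k}{\ell}\ge 2^{r/2}p^{2\nu\delta}\log p$ in~\eqref{eq:Cond U} amounts to $H(\rho)(1-\varepsilon)>1/2$, while $\#\cV_{k,m}(n)=2^{\varepsilon r+o(r)}$ satisfies~\eqref{eq:Cond V} once $\nu$ is chosen large. To repair your argument, replace the unproven $\ell^1$ Fourier estimate for the ball by this choice of a structured subset of the ball together with the H\"older--Weil bilinear bound.
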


Note that an immediate application of the Burgess result~\cite{Burg1} would only
give $0.25$ in place of $\rho_0$. 

It is also interesting to study the sparsest primitive root and quadratic 
non-residue. More precisely, let $W_p$ be the smallest Hamming weight
(see~\cite[Section~1.1]{MS}) 
of the binary expansion of the primitive roots  
$g\in  \{1, \ldots, p-1\}$ modulo $p$.
For $p \ne 2$, we define $w_p$ analogously with respect to quadratic 
non-residues modulo $p$.
Since for $p>2$
a primitive root modulo $p$ is necessarily a quadratic non-residue
modulo $p$, we have
$$
w_p \le W_p \le \Delta_p.
$$
We are not able to improve the above bound for $W_p$, however, using a
recent result of~\cite{BaGaHBSh} we obtain a more precise estimate on $w_p$.

Let 
\begin{equation}
\label{eq:theta}
\vartheta_0 = \frac{1}{8 \sqrt{e}} = 0.07581633\ldots.
\end{equation}

\begin{theorem}
\label{thm:SpareseNonRes} 
 We have
 $w_p \le \(\vartheta_0 + o(1)\) r$,
as $p \to \infty$, where  $r$ is the number of 
 binary digits of $p$. 
 \end{theorem}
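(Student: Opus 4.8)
The plan is to produce a quadratic non-residue of small Hamming weight by searching inside the set of integers that are sums of few powers of two and lie below the classical Vinogradov--Burgess threshold for the least quadratic non-residue. Write $N_0 = p^{1/(4\sqrt e)+o(1)}$, so that $\log_2 N_0 = (1/(4\sqrt e)+o(1))\,r$. The key numerical observation is that the target weight $\vartheta_0 r$ equals exactly $\tfrac12\log_2 N_0$: the factor $\sqrt e$ is the one coming from Vinogradov's smoothing in the least non-residue problem, while the extra factor $\tfrac12$ reflects that we are looking for a non-residue below $N_0$ whose binary expansion carries the \emph{typical} number of ones rather than the worst-case number.

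First I would fix a bit length $L = (1/(4\sqrt e)+o(1))\,r$ and a weight $k = \vartheta_0 r = (1/2+o(1))L$, and introduce the sparse family
$$
\cS = \bigl\{\, 2^{a_1}+\cdots+2^{a_k} \ :\ 0 \le a_1 < \cdots < a_k < L \,\bigr\},
$$
consisting of $\binom{L}{k}$ integers of Hamming weight exactly $k$, each smaller than $2^L = N_0 < p$. Every element of $\cS$ is therefore a nonzero residue, so it is a quadratic residue precisely when $\chi(n)=1$, where $\chi$ is the Legendre symbol modulo $p$. Consequently, it suffices to establish the character sum bound
$$
\Bigl|\, \sum_{n\in\cS}\chi(n) \,\Bigr| = o\bigl(|\cS|\bigr),
$$
since this forces some $n\in\cS$ to satisfy $\chi(n)=-1$, exhibiting a quadratic non-residue of weight $k \le (\vartheta_0+o(1))r$, as required.

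The arithmetic heart of the argument is this estimate over $\cS$, and this is where I would invoke the result of~\cite{BaGaHBSh}. By the entropy estimate for binomial coefficients, $|\cS| = \binom{L}{k} = 2^{(H(k/L)+o(1))L} = 2^{(1+o(1))L} = p^{1/(4\sqrt e)+o(1)}$, which sits well below $p^{1/2}$. Hence neither completion of the sum nor a direct appeal to the Burgess bound yields anything nontrivial here: one is firmly below the square-root barrier, and the only available input is the sharper estimate of~\cite{BaGaHBSh}, in which the Vinogradov-type mechanism (products of small primes are residues once the least non-residue is large) is already built in and is exactly what supplies the constant $1/(4\sqrt e)$. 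The remaining constant $\tfrac12$, and hence the final value $\vartheta_0 = 1/(8\sqrt e)$, then follows purely from the choice $k\approx L/2$, i.e.\ from passing from the bit length of a non-residue to its typical Hamming weight.

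The main obstacle is precisely obtaining this power saving in the range $|\cS|\approx p^{1/(4\sqrt e)}\ll p^{1/2}$: unconditionally there is no cancellation below the square-root threshold, so the special smooth/multiplicative structure packaged in~\cite{BaGaHBSh} must be matched to the additively defined, fixed-weight family $\cS$. I would expect the rest to be routine: checking that $L$ and $k$ can be chosen simultaneously so that $k=(\vartheta_0+o(1))r$ while the hypotheses of~\cite{BaGaHBSh} still deliver $\bigl|\sum_{n\in\cS}\chi(n)\bigr|=o(|\cS|)$, together with the elementary bookkeeping $\binom{L}{\lfloor L/2\rfloor}=2^{(1+o(1))L}$ and the relation $\vartheta_0 r = \tfrac12\log_2 N_0$ tying together the weight, the bit length, and the constants $1/(4\sqrt e)$ and $1/(8\sqrt e)$.
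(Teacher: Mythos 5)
There is a genuine gap, and it sits exactly where you located it yourself. Your argument reduces the theorem to the cancellation estimate $\bigl|\sum_{n\in\cS}\chi(n)\bigr| = o(|\cS|)$ over the fixed-weight family $\cS$, a digitally defined set of size $p^{1/(4\sqrt e)+o(1)} \approx p^{0.152}$, and you then propose to obtain this from the result of~\cite{BaGaHBSh}. But that result (Lemma~\ref{lem:PosPropNonres} in the paper) is a statement about sums of the Legendre symbol over an \emph{initial interval} $[1,N]$; it says nothing about sums over a sparse set selected by a Hamming-weight condition, and there is no known mechanism for transferring it to such a set. Character-sum bounds over digit-restricted sets (the~\cite{BaCoSh}/\cite{OstShp} technology, cf.\ Lemma~\ref{lem:Sum general}) require the set to be far larger relative to $p$ than $p^{0.152}$; as you note, one is ``firmly below the square-root barrier,'' and no unconditional power saving is available there. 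So the central step of your plan is not a routine matching of hypotheses but an open problem, and the proof does not go through.

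The paper sidesteps this entirely by running the logic in the opposite order: apply~\cite{BaGaHBSh} to the interval $[1,N]$ with $N = \rf{p^{1/(4\sqrt{e})+\varepsilon}}$, which gives at least $0.5\delta N$ quadratic non-residues in $[1,N]$; then observe by the entropy bound (Lemma~\ref{lem:Binom}) that the number of integers $n\le N$ of Hamming weight at least $w = \fl{(1/2+\varepsilon)s}$, where $s = \rf{\log N/\log 2}$, is at most
$$
\sum_{k=w}^{s}\binom{s}{k} \le s\binom{s}{w} \le s\, 2^{sH(1/2+\varepsilon)+o(s)} \ll N^{\eta}
$$
for some $\eta<1$. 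Since $N^\eta = o(N)$, the heavy integers cannot account for all the non-residues, so some non-residue in $[1,N]$ has weight at most $w-1 \le (\vartheta_0+o(1))r$. In other words, no equidistribution \emph{within} the sparse set is needed at all: density of non-residues in the full interval plus a pigeonhole count of the heavy integers suffices. Your numerology is exactly right --- $\vartheta_0 r$ is half the bit length of the Burgess--Vinogradov threshold, reflecting the typical weight of an integer below $N$ --- but restricting to the light integers \emph{first} and then demanding cancellation inside them creates an obstacle the theorem does not require you to overcome.
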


Again, a direct application of the Burgess result~\cite{Burg1} 
gives a weaker bound,
namely 
$$
w_p \le \(\frac{1}{4\sqrt{e}} + o(1)\)r.
$$

Finally, we also consider the distribution of primitive roots in 
so-called {\it Hilbert cubes\/}:
 For $a_0, a_1,
\ldots, a_d \in \F_p$ write
\begin{equation}
\label{hilbert_cube}
\cH(a_0; a_1, \ldots, a_d) = \left\{ a_0 + \sum_{i=1}^d \vartheta_i a_i:
\vartheta_i \in \{0,1\}\right\}.
\end{equation}
As in~\cite{HS}, we define $f(p)$ as the largest $d$ such that 
there are $a_0, a_1, \ldots, a_d \in \F_p$ with   pairwise distinct
$a_1, \ldots, a_d$ such that
$\cH(a_0; a_1, \ldots, a_d)$ does not contain a quadratic non-residue 
modulo $p$. Furthermore, we define $F(p)$ 
as the largest $d$ 
 such that there are $a_0, a_1, \ldots, a_d \in \F_p$ with pairwise distinct
$a_1, \ldots, a_d$ such that $\cH(a_0; a_1, \ldots, a_d)$ does not contain a 
primitive root modulo $p$. Also, for the complementary sets we define
$\overline{f}(p)$ and $\overline{F}(p)$
as the largest $d$ such that  $\cH(a_0; a_1, \ldots, a_d)$
is entirely in the set of quadratic non-residues 
respectively primitive roots modulo $p$.

As a primitive root is a non-residue,
and as a set of residues becomes a set of
non-residues by multiplication with one fixed non-residue 
(recall that $0$ is neither residue, nor non-residue) we have:
$$
\overline{F}(p)\leq
\overline{f}(p)= f(p) \le F(p).
$$
 Hegyv\'{a}ri and S\'{a}rk\"{o}zy~\cite[Theorem~2]{HS} give the bound 
 $f(p) < 12 p^{1/4}$. 
Here we improve the exponent and also  extend the result to $F(p)$.

\begin{theorem} 
\label{thm:HilbCube} We have  $F(p) \le p^{1/5+o(1)}$ 
as $p \to \infty$. 
\end{theorem}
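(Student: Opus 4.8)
The plan is to detect a primitive root inside $\cH(a_0;a_1,\ldots,a_d)$ by a multiplicative character sum and to show that the detection succeeds as soon as $d$ exceeds $p^{1/5+o(1)}$. Writing $g(\vartheta)=a_0+\sum_{i=1}^d\vartheta_i a_i$ for $\vartheta\in\{0,1\}^d$ and using the standard indicator of primitive roots
$$
\frac{\varphi(p-1)}{p-1}\sum_{e\mid p-1}\frac{\mu(e)}{\varphi(e)}\sum_{\ord\chi=e}\chi(n),
$$
the number $N$ of $\vartheta$ with $g(\vartheta)$ a primitive root equals a main term $\frac{\varphi(p-1)}{p-1}(2^d-z)$, where $z=\#\{\vartheta:g(\vartheta)\equiv 0\}$, plus an error bounded by $2^{\omega(p-1)}\max_{\chi\neq\chi_0}|T_\chi|$ with $T_\chi=\sum_\vartheta\chi(g(\vartheta))$. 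Since $2^{\omega(p-1)}=p^{o(1)}$ and $\varphi(p-1)/(p-1)=p^{-o(1)}$, it suffices to prove $|T_\chi|\le 2^d p^{-\delta}$ for some fixed $\delta>0$ and all $\chi\neq\chi_0$ whenever $d\ge p^{1/5+o(1)}$; then $N>0$, so $\cH$ contains a primitive root, and the theorem follows by contraposition.

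To bound $T_\chi$ I would split the coordinates into two blocks $I\sqcup J=\{1,\ldots,d\}$ and write each cube element as $\alpha+\beta$, where $\alpha$ ranges over the sub-sums $a_0+\sum_{i\in I}\vartheta_i a_i$ and $\beta$ over $\sum_{j\in J}\vartheta_j a_j$. Grouping by the values of $\alpha$ and $\beta$ with their representation functions $r_I,r_J$, Cauchy--Schwarz in $\alpha$ followed by completing the sum over $\F_p$ reduces matters to the complete sums $\sum_{a\in\F_p}\chi(a+\beta_1)\overline\chi(a+\beta_2)$, which equal $p-1$ for $\beta_1=\beta_2$ and $-1$ otherwise (the latter being the elementary Weil-type evaluation of a character composed with a M\"obius transformation). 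This yields $|T_\chi|^2\le p\,E_I\,E_J$, where $E_I=\sum_\alpha r_I(\alpha)^2$ and $E_J=\sum_\beta r_J(\beta)^2$ are the additive energies, i.e.\ the collision counts, of the two sub-cubes.

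Feeding this into the first step, the cube is guaranteed to contain a primitive root unless $E_I E_J\ge 4^d p^{-1-o(1)}$, i.e.\ unless both sub-cubes have anomalously many additive coincidences. This is the heart of the matter and the step I expect to be the main obstacle: I must show that such heavy collisions are impossible once $d$ is large. Writing $E_I\le \mu_I 2^{|I|}$ with $\mu_I$ the maximal multiplicity of a sub-sum, the bad case forces $\mu_I\mu_J\ge 2^d p^{-1-o(1)}$, so that many of the $2^{|I|}$ sign patterns collapse to a single value; equivalently, the distinct generators $\{a_i\}_{i\in I}$ satisfy a very large number of $\{-1,0,1\}$-relations. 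The combinatorial input is a lower bound on the number of distinct subset sums of $k$ distinct elements of $\F_p$ (of order $k^2$ while this stays below $p$, and all of $\F_p$ once $k\gg\sqrt p$), which caps the admissible multiplicities, and balancing the character-sum threshold $E_I E_J\le 4^d p^{-1-o(1)}$ against this subset-sum bound constrains $d$.

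A crude version of this balancing, with an even split and the plain $k^2$ subset-sum bound, already recovers the exponent $1/4$ of Hegyv\'{a}ri and S\'{a}rk\"{o}zy~\cite{HS}. The improvement to $1/5$ — where I expect the real work to lie — comes from a sharper treatment of the degenerate regime: rather than merely counting distinct subset sums, one controls the full representation function (or uses an uneven, possibly iterated, splitting together with the Weil bound applied with several shifts), extracting more cancellation from the highly structured sets that maximise the energy. Optimising this trade-off gives $d\le p^{1/5+o(1)}$, and hence $F(p)\le p^{1/5+o(1)}$. Two routine points must also be checked: the elements with $g(\vartheta)\equiv 0$, which are absorbed into the term $z$ and are negligible against the main term, and the uniformity of the $o(1)$ terms in $p$, which is guaranteed by the bound $2^{\omega(p-1)}=p^{o(1)}$.
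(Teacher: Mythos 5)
Your opening reduction (the indicator from Lemma~\ref{lem:prim root}, the factors $2^{\omega(p-1)}=p^{o(1)}$ and $\varphi(p-1)/(p-1)=p^{-o(1)}$, and the goal $|T_\chi|\le 2^dp^{-\delta}$) is fine and matches the paper, and your bilinear bound $|T_\chi|^2\ll p\,E_I\,E_J$ is correct. But the step you yourself flag as ``the heart of the matter'' is a genuine gap, and the route you sketch for it cannot be repaired. First, a lower bound on the number of distinct subset sums does \emph{not} cap the energy or the maximal multiplicity: for the interval-like cube $a_i=i$, $i=1,\ldots,d$, the number of distinct sub-sums is $\asymp d^2$ (as large as the Olson-type bound allows), yet the sub-cube sums concentrate in an interval of length $O(d^2)$, so $E_I E_J\gg 4^d/d^{O(1)}$ and your inequality gives nothing unless $d\gg p^{1/3}$ --- worse even than Hegyv\'ari--S\'ark\"ozy. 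The version that does recover $1/4$ is the multiplicity-free one (Vinogradov's bilinear bound $|\sum_{x\in A,y\in B}\chi(x+y)|\le\sqrt{p|A||B|}$ applied to the two sub-cubes \emph{as sets}, plus $|A|,|B|\gg\min(p,d^2)$). Second, no refinement of this bilinear/completion scheme can reach $1/5$: completing a variable over $\F_p$ via Weil always costs $\sqrt{p}$, so one of the sub-cubes must have more than $p^{1/2}$ \emph{distinct} elements, which for the cube $a_i=i$ forces $d\gg p^{1/4}$. Beating $1/4$ requires Burgess-type amplification (averaging over shifts $x\mapsto x+ht$), which needs interval structure that an arbitrary sub-cube simply does not have. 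So ``a sharper treatment of the degenerate regime'' is not a missing computation; it is the missing idea, and within your framework there is a structural obstruction to it.

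The paper's proof goes a completely different way, and the difference is precisely how it gets past this barrier. It invokes Schoen's additive-combinatorics result (Lemma~\ref{addcomb}): a Hilbert cube of dimension $d\ge 8(p/\log p)^{1/5}$ contains an arithmetic progression $an+b$, $n=1,\ldots,N$, of length
$$
N \gg \left(\frac{d}{\log p}\right)^{5/4},
$$
so with $d=\rf{p^{1/5+\varepsilon}}$ one gets $N\gg p^{1/4+\varepsilon}$. On an arithmetic progression the sum $\sum_{n\le N}\chi(an+b)=\chi(a)\sum_{n\le N}\chi(n+\overline{a}b)$ is a sum over an interval, where the Burgess bound (Lemma~\ref{lem:PVB}, with $\nu$ large in terms of $\varepsilon$) gives $\ll Np^{-\eta}$; the indicator of Lemma~\ref{lem:prim root} then produces a primitive root inside the progression, hence inside the cube. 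The exponent $1/5$ is exactly Burgess's $1/4$ divided by Schoen's exponent $5/4$; the degenerate, interval-like cubes that defeat your energy argument are exactly the ones Schoen's theorem converts into long progressions where Burgess applies. Your proposal contains neither this (or any substitute) combinatorial input nor any mechanism playing its role, so it establishes at best $F(p)\le p^{1/4+o(1)}$.
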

As for primes with $(p-1)/2$ also a prime
the set of non-residues is the same as the set of primitive roots one should
not expect that upper bounds on 
$\overline{F}(p)$ are generally better than those for $F(p)$.

From their result, Hegyv\'{a}ri and S\'{a}rk\"{o}zy~\cite{HS} 
give an application to the
maximal dimension $d$ of Hilbert cubes in the set of integer squares.
We do not follow this path here, but remark that  the first two authors 
have recently
improved the bound on $d$ using a different 
method, see~\cite{DietmannandElsholtz}.

It is likely that the bound $p^{1/5+o(1)}$ is far from the truth. 
One may conjecture a bound of $p^{o(1)}$ or even $(\log p)^C$ for some positive
constant $C$. 
Indeed, for the easier problem of \textit{subset sums} where
$a_0=0$, for $p \equiv \pm 3 \pmod 8$ and subset sums
avoiding quadratic non-residues modulo $p$,
Csikv{\'a}ri~\cite[Corollary 2.2]{Csi} has obtained an upper
bound $\log p/\log 2$.
However, it may be difficult to prove a bound of this type for the
general case.
It has been observed in~\cite{HS, Csi} that improving the
bound on $f(p)$ 
to $p^{\alpha}$ with
 $\alpha<\vartheta_0$, where $\vartheta_0$ is
 given by~\eqref{eq:theta}, is impossible without 
improving the Burgess bound~\cite{Burg1} on the smallest quadratic non-residue.
To see this,
one simply constructs a Hilbert cube consisting of many small elements
$a_i=i$, $i=0, \ldots , d$ where $d=f(p)$. Here the elements of the Hilbert cube
are at most 
$$
1+2+ \ldots + d<d^2=f(p)^2<p^{2\alpha}.
$$ 
Hence $ 2\alpha \geq 1/(4e^{-1/2})+o(1)$ 
unless one improves on the Burgess bound. 
In fact, the same argument shows that in the bound on $F(p)$ 
one cannot go beyond $1/8= 0.125$ in the exponent 
without improving the Burgess bound $g(p) \le p^{1/4+o(1)}$ on the least primitive root
$g(p)$ modulo $p$.
In this context, let us recall that, assuming the
Generalised Riemann Hypothesis (GRH),  Shoup~\cite{Shoup}
has  proved a bound of 
\begin{equation}
\label{ShoupBound}
g(p) = O((\log p)^6). 
\end{equation}

Finally, let us remark that Theorem \ref{thm:HilbCube} immediately gives
$\Delta_p \le (0.2+o(1)) r$, which is stronger than the bound 
$\Delta_p \le (0.25+o(1)) r$
resulting from applying the Burgess bound, but weaker than our result
obtained in Theorem~\ref{thm:Deltap} making use of a direct
application of exponential sums.

\section{Preparations}
\label{sec:Preps}

Throughout the paper the implied constants in the symbols ``$O$'', 
``$\ll$'' and ``$\gg$'' may depend on an integer parameter $\nu \ge 1$.  We recall that
the expressions $A \ll B$, $B \gg A$ and $A=O(B)$ are each equivalent to the
statement that $|A|\le cB$ for some constant $c$.  As usual,
$\log z$ denotes the natural logarithm of $z$.

The letter $p$ (possibly subscripted) always denotes a prime.

We also use $\F_p$ to denote the finite field of $p$ elements.

We need the following well-known statement (see, for example,~\cite[Section~10.11, Lemma 7]{MS}):

\begin{lemma}
\label{lem:Binom}
For any integers $k \ge \ell \ge 0$,
$$ 
\binom{k}{\ell } = 2^{k H(\ell /k)+ o(k)}.
$$
\end{lemma}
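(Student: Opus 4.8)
The plan is to sandwich $\binom{k}{\ell}$ between two quantities that differ only by a polynomial factor in $k$, both of which equal $2^{kH(\ell/k)}$ up to such a factor; taking base-$2$ logarithms then absorbs this factor into the $o(k)$ term. First I would dispose of the degenerate cases $\ell = 0$ and $\ell = k$, where $\binom{k}{\ell} = 1 = 2^0$ and $H(\ell/k) = 0$ under the usual convention $0 \log 0 = 0$, so there is nothing to prove. Hence assume $0 < \ell < k$ and write $\gamma = \ell/k \in (0,1)$. Unwinding the definition of the binary entropy function gives the identity $2^{kH(\gamma)} = \gamma^{-\ell}(1-\gamma)^{-(k-\ell)}$, which is the target quantity in a convenient form.

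For the upper bound I would apply the binomial theorem at the point $\gamma$:
$$
1 = (\gamma + (1-\gamma))^k = \sum_{j=0}^k \binom{k}{j} \gamma^j (1-\gamma)^{k-j}.
$$
Since every summand is nonnegative, retaining only the term $j = \ell$ yields $\binom{k}{\ell} \gamma^\ell (1-\gamma)^{k-\ell} \le 1$, that is, $\binom{k}{\ell} \le 2^{kH(\gamma)}$, with no error term at all.

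For the lower bound I would exploit the same decomposition from the opposite direction. The sum above has exactly $k+1$ nonnegative terms totalling $1$, so its largest term is at least $1/(k+1)$. The key step is to verify that this largest term is precisely the one with $j = \ell$: comparing consecutive terms through the ratio $\frac{k-j}{j+1} \cdot \frac{\gamma}{1-\gamma}$ shows it exceeds $1$ exactly when $j \le \ell - 1 + \gamma$, so the terms increase up to $j = \ell$ and decrease thereafter. This is the one point that requires a short computation, and it relies crucially on the choice $\gamma = \ell/k$; it is the main (though modest) obstacle. Consequently $\binom{k}{\ell} \gamma^\ell (1-\gamma)^{k-\ell} \ge 1/(k+1)$, giving $\binom{k}{\ell} \ge 2^{kH(\gamma)}/(k+1)$.

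Combining the two bounds, $2^{kH(\gamma)}/(k+1) \le \binom{k}{\ell} \le 2^{kH(\gamma)}$. Taking base-$2$ logarithms, the discrepancy from $kH(\ell/k)$ lies between $-\log_2(k+1)$ and $0$, and since $\log_2(k+1) = o(k)$ the claimed estimate $\binom{k}{\ell} = 2^{kH(\ell/k)+o(k)}$ follows, uniformly in $\ell$.
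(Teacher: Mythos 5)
Your proof is correct. For comparison: the paper does not prove this lemma at all --- it quotes it as well known from~\cite[Section~10.11, Lemma~7]{MS}, where the statement is obtained via Stirling's formula, which in fact pins down the subexponential factor as $\Theta\bigl((k\gamma(1-\gamma))^{-1/2}\bigr)$ when $\gamma=\ell/k$ is bounded away from $0$ and $1$. Your route is the standard elementary sandwich instead: the upper bound $\binom{k}{\ell}\le 2^{kH(\gamma)}$ by keeping a single term of the expansion of $1=(\gamma+(1-\gamma))^k$, and the lower bound $\binom{k}{\ell}\ge 2^{kH(\gamma)}/(k+1)$ because the $j=\ell$ term is the largest of the $k+1$ nonnegative terms summing to $1$; your ratio computation showing the terms increase up to $j=\ell$ and decrease afterwards is exactly right, and it is indeed the one place where the specific choice $\gamma=\ell/k$ is used. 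What your argument buys is self-containedness (no Stirling) and explicit uniformity in $\ell$: the exponent is off by at most $\log_2(k+1)$ whatever $\ell$ is, which is precisely what the paper needs, since the lemma is applied with parameters such as $\ell=\lfloor\rho k\rfloor$ varying with $k$ and only a $2^{o(k)}$ factor is ever exploited. What it gives up relative to the Stirling route is the sharp polynomial factor, which is immaterial for this paper.
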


We note that $\overline \chi(z) =
\chi(z^{p-2})$ for $z \in \F_p^*$ and a multiplicative character $\chi$
of $\F_p^*$. 

We need the following statement which follows immediately from the Weil bound,  
see~\cite[Chaper~11]{IwKow}, and which is essentially~\cite[Theorem~2]{MaSa}.

\begin{lemma}
\label{lem:Weil-Char}
For  any multiplicative character
$\chi$ of $\F_p^*$ of order $m\ge 2$, any integers $M$ and $K$ with $1\le K<p$,
and any polynomial $F(U)\in\Fp[U]$
with  $d$ distinct roots (of arbitrary multiplicity)
such that $F(U)$ is not the $m$-th power of a rational function,
we have
$$\sum_{u=M+1}^{M+K} \chi(F(u))\ll d p^{1/2} \log p.$$
\end{lemma}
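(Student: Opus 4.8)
The plan is to derive this incomplete sum estimate from the complete (full-range) form of the Weil bound by the standard completion technique with additive characters. Recall that for a nontrivial multiplicative character $\chi$ of order $m$ and a polynomial $F$ with $d$ distinct roots that is not an $m$-th power of a rational function, the Weil bound in the form of~\cite[Chapter~11]{IwKow} gives
$$
\left|\sum_{u=0}^{p-1}\chi(F(u))\right| \le (d-1)p^{1/2},
$$
and, more generally, for every $a\in\Fp$ a mixed additive--multiplicative version bounds the twisted complete sum
$$
\left|\sum_{u=0}^{p-1}\chi(F(u))\,\ep(au)\right| \ll d\, p^{1/2},
$$
where $\ep(z)=\exp(2\pi i z/p)$. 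Because the additive twist $au$ is \emph{linear} in $u$, this bound stays proportional to the number $d$ of distinct roots of $F$ rather than to $\deg F$.

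First I would express the incomplete sum as a complete one by inserting the orthogonality relation for additive characters: for $u\in\{0,\dots,p-1\}$ the quantity $p^{-1}\sum_{a=0}^{p-1}\sum_{v=M+1}^{M+K}\ep(a(u-v))$ is the indicator of the event $u\in[M+1,M+K]$. Interchanging the order of summation yields
$$
\sum_{u=M+1}^{M+K}\chi(F(u)) = \frac1p\sum_{a=0}^{p-1}\left(\sum_{u=0}^{p-1}\chi(F(u))\,\ep(au)\right)\left(\sum_{v=M+1}^{M+K}\ep(-av)\right).
$$
The $a=0$ term contributes at most $p^{-1}\cdot K\cdot(d-1)p^{1/2}\le (d-1)p^{1/2}$, using $K<p$ together with the untwisted Weil bound. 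For each $a\ne0$ the inner complete sum is $\ll d\,p^{1/2}$ by the mixed Weil bound, while the geometric sum satisfies $\bigl|\sum_{v=M+1}^{M+K}\ep(-av)\bigr|\ll\|a/p\|^{-1}$, where $\|\cdot\|$ denotes the distance to the nearest integer.

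It then remains to sum the geometric factors over $a$. The standard estimate $\sum_{a=1}^{p-1}\|a/p\|^{-1}\ll p\log p$ shows that the total contribution of the terms with $a\ne0$ is
$$
\ll \frac1p\cdot d\,p^{1/2}\cdot p\log p = d\,p^{1/2}\log p,
$$
which dominates the $a=0$ term and yields the asserted bound $\ll d\,p^{1/2}\log p$; the single logarithmic factor is produced precisely by this summation of the geometric series.

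The one point that requires care—and is really the crux—is the invocation of the mixed Weil bound for $a\ne0$: one must verify the non-degeneracy hypotheses that guarantee square-root cancellation for $\sum_u\chi(F(u))\,\ep(au)$, and, crucially, that since the additive argument $au$ is linear, the power of $p^{1/2}$ is governed by the number of distinct roots $d$ and not by $\deg F$. The non-triviality needed for the $a=0$ term is exactly the hypothesis that $F$ is not an $m$-th power of a rational function; for $a\ne0$ the nontrivial additive character already forces non-triviality, so no further condition on $F$ is needed there. This is the content of~\cite[Theorem~2]{MaSa}, and the argument above simply reassembles it into the stated form.
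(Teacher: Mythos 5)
Your proof is correct and is essentially the argument the paper relies on: the paper does not prove this lemma but simply cites the Weil bound~\cite[Chapter~11]{IwKow} and~\cite[Theorem~2]{MaSa}, and your completion-with-additive-characters argument (complete mixed sums bounded by $\ll d\,p^{1/2}$ since the twist $au$ is linear, plus the $\sum_a \|a/p\|^{-1} \ll p\log p$ estimate producing the logarithm) is precisely the standard proof behind those citations.
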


The following result  is a combination of  the bounds of 
P{\'o}lya-Vinogradov  (for $\nu =1$) and Burgess
(for $\nu\ge2$),
see~\cite[Theorems~12.5 and 12.6]{IwKow}.

\begin{lemma}
   \label{lem:PVB} For arbitrary  integers
$W$ and $Z$ with  $1 \le Z \le p$, for an
arbitrary non-principal multiplicative character $\chi$ of $\F_p^*$,
and for an arbitrary positive integer $\nu$, we have
$$
\left| \sum_{z = W+1}^{W+Z}
\chi(z)\right|  \le Z^{1 -1/\nu} p^{(\nu+1)/4\nu^2 + o(1)}.
$$
\end{lemma}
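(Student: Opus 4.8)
The plan is to treat the two regimes of $\nu$ separately. For $\nu = 1$ the asserted inequality is exactly Pólya--Vinogradov, which I would obtain by completion: expanding the indicator of the interval $(W,W+Z]$ in additive characters modulo $p$ turns $\sum_{z=W+1}^{W+Z}\chi(z)$ into a linear combination of complete Gauss sums, each of absolute value $p^{1/2}$, and summing the geometric coefficients costs only a factor $\log p$. This gives $|S| \ll p^{1/2}\log p = Z^{0}p^{(1+1)/4+o(1)}$, which is the claim for $\nu=1$. The substance is the Burgess method for $\nu \ge 2$.

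For the main case I would write $S = \sum_{z=W+1}^{W+Z}\chi(z)$ and exploit that $S$ is almost invariant under small translations. Fix parameters $A,B \ge 1$ with $2AB \le Z$, and let $\cB$ be the set of primes $b$ with $B < b \le 2B$, so $|\cB| \asymp B/\log B$ by the prime number theorem. For each $a$ with $1 \le a \le A$ and each $b \in \cB$ one has $S = \sum_{z}\chi(z+ab) + O(ab)$ with $ab \le 2AB$, and averaging over all such $a,b$ yields
$$|S| \le \frac{|T|}{A|\cB|} + O(AB), \qquad T = \sum_{a \le A}\sum_{b \in \cB}\sum_{z=W+1}^{W+Z}\chi(z+ab).$$
Using $z+ab \equiv b(a + \overline b z) \pmod p$, where $\overline b \equiv b^{p-2}$ is the inverse of $b$, together with $|\chi(b)|=1$, I would peel off the $b$-sum by the triangle inequality and reorganise $T$ according to the residue $x \equiv \overline b z \pmod p$. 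Writing $V(x) = \left|\sum_{a \le A}\chi(x+a)\right|$ and letting $U(x)$ count the $b \in \cB$ for which $x = \overline b z$ for some $z \in (W,W+Z]$, this gives $|T| \le \sum_{x \bmod p} U(x)V(x)$, with the elementary identities $U(x) \le |\cB|$ and $\sum_x U(x) = |\cB| Z$.

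The cancellation in the short sum is captured by an even moment. Hölder's inequality with exponents $2\nu$ and $2\nu/(2\nu-1)$ gives $|T| \le \left(\sum_x U(x)^{2\nu/(2\nu-1)}\right)^{1-1/(2\nu)}\left(\sum_x V(x)^{2\nu}\right)^{1/(2\nu)}$. For the second factor the sum over $x$ is already complete, so expanding the $2\nu$-th power and using that $\chi$ has order $m$, hence $\overline\chi(y) = \chi(y^{m-1})$, turns each term into a complete sum $\sum_{x}\chi(F(x))$ with $F(x) = \prod_{i=1}^{\nu}(x+a_i)\prod_{j=\nu+1}^{2\nu}(x+a_j)^{m-1}$, a polynomial with at most $2\nu$ distinct roots. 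Whenever the multisets $\{a_1,\dots,a_\nu\}$ and $\{a_{\nu+1},\dots,a_{2\nu}\}$ differ, $F$ is not an $m$-th power of a rational function, so the Weil bound underlying Lemma~\ref{lem:Weil-Char} bounds this term by $O(p^{1/2})$; the remaining diagonal tuples number $O(A^\nu)$ and contribute at most $p$ each. Hence $\sum_x V(x)^{2\nu} \ll A^\nu p + A^{2\nu}p^{1/2}$.

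The hard part is the first Hölder factor: estimating it by the trivial bound $U(x) \le |\cB|$ loses roughly a square root of $p$ and falls short of the stated exponent. Instead I would interpolate between $\sum_x U(x) = |\cB| Z$ and $\sum_x U(x)^2$, bounding the latter by a coincidence count: $\sum_x U(x)^2$ equals the number of quadruples $(b_1,b_2,z_1,z_2)$ with $b_1,b_2 \in \cB$, $z_1,z_2 \in (W,W+Z]$ and $b_2 z_1 \equiv b_1 z_2 \pmod p$, which a lattice-point argument (using that the $b$ are primes of size about $B$) controls by $\ll |\cB| Z + |\cB|^2 Z^2/p$. Feeding these estimates back and choosing $A$ and $B$ to balance the diagonal term against the Weil term, subject to $2AB \le Z$, then produces the exponents $Z^{1-1/\nu}$ and $(\nu+1)/(4\nu^2)$, so that $|S| \le Z^{1-1/\nu}p^{(\nu+1)/(4\nu^2)+o(1)}$. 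The three delicate points on which I would spend the effort are the coincidence count and its dependence on the primality and dyadic localisation of the $b$, the verification that $F$ fails to be an $m$-th power for characters of small order $m$ (where diagonal and off-diagonal tuples can interfere), and the final optimisation ensuring that the error $O(AB)$ stays below the main term.
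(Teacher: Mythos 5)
You should first be aware that the paper does not prove this lemma at all: it is stated as a known combination of the P\'olya--Vinogradov bound (the case $\nu=1$) and the Burgess bound (the cases $\nu\ge 2$), with a citation to~\cite[Theorems~12.5 and 12.6]{IwKow}. Your sketch is a reconstruction of precisely the classical argument behind those theorems: completion into Gauss sums for $\nu=1$, and for $\nu\ge 2$ Burgess's amplification --- shifting by products $ab$ with $b$ prime near $B$, H\"older's inequality, the Weil bound for the complete $2\nu$-th moment $\sum_x V(x)^{2\nu}\ll A^\nu p + A^{2\nu}p^{1/2}$, and the multiplicative coincidence count $\sum_x U(x)^2 \ll p^{o(1)}(BZ + B^2Z^2/p)$. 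Your two-factor H\"older step followed by interpolating $\sum_x U(x)^{2\nu/(2\nu-1)}$ between $\sum_x U(x)$ and $\sum_x U(x)^2$ is arithmetically equivalent to the usual three-factor application $\sum_x UV \le (\sum_x U)^{1-1/\nu}(\sum_x U^2)^{1/2\nu}(\sum_x V^{2\nu})^{1/2\nu}$, so that variation is cosmetic. In substance you are supplying the proof that the paper outsources to the literature, and the outline is correct.

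Two points of detail deserve correction. First, your claim that $F$ fails to be an $m$-th power \emph{whenever} the multisets $\{a_1,\dots,a_\nu\}$ and $\{a_{\nu+1},\dots,a_{2\nu}\}$ differ is false for characters of small order: for $m=2$, $\nu=2$, the polynomial $(x+a_1)^2(x+a_2)^2$ with $a_1\ne a_2$ is a square although the multisets differ. The correct criterion is that each root multiplicity $r_a+(m-1)s_a$ be divisible by $m$, i.e.\ $r_a\equiv s_a \pmod m$; since any such configuration forces every occurring shift to occupy at least two of the $2\nu$ slots, the exceptional tuples still number $O_\nu(A^\nu)$ --- the same multiplicity device the paper itself uses in the proof of Lemma~\ref{lem:Sum general} --- so your moment bound survives, and to your credit you flagged exactly this as a point to verify. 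Second, on the optimisation: the balancing choice is $A\approx p^{1/2\nu}$ and $B\approx Zp^{-1/2\nu}$, with a power of $\log p$ of slack so that $2AB\le Z$ holds and the error $O(AB)$ is $o(Z)$; this presupposes $Z\ge p^{(\nu+1)/4\nu+o(1)}$, below which the asserted bound exceeds the trivial bound $Z$ and there is nothing to prove, and the energy count requires normalising $0\le W<p$ (harmless by periodicity) so that the integer-equality solutions $b_2z_1=b_1z_2$ can be controlled using the primality of the $b$'s, as you indicate. With these standard adjustments, all absorbed by the $p^{o(1)}$ in the exponent, your sketch compiles into a complete proof of the lemma.
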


As usual, we use $\mu(d)$ and $\varphi(d)$ to denote the
M{\"o}bius and the Euler functions of an integer $d \ge 1$, respectively. 
We now mention the following well-known characterisation of primitive roots
modulo $p$ which follows from the inclusion-exclusion principle and
the orthogonality property of characters 
(see, for example,~\cite[Exercise~5.14]{LN}).

\begin{lemma}
\label{lem:prim root}
For any integer $a$, we have
$$
\frac{\varphi(p-1)}{p-1}\sum_{d \mid p-1} \frac{\mu(d)}{\varphi(d)}
\sum_{\ord \chi = d} \chi(a)
= \left\{\begin{array}{ll}
1,& \text{if $a$ is a primitive root modulo $p$,}\\
0,& \text{otherwise,}
\end{array}
\right.
$$
where the inner sum is taken over all $\varphi(d)$ multiplicative characters $\chi$ modulo $p$ 
of order $d$. 
\end{lemma}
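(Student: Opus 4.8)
The plan is to realise the primitive-root indicator as a product of local ``$a$ is not an $\ell$-th power'' factors, one for each prime $\ell \mid p-1$, and then to expand this product and match its terms against the stated double sum. Throughout write $n = p-1$, and first dispose of the degenerate case $p \mid a$: here $\chi(a) = 0$ for every multiplicative character $\chi$ (with the usual convention $\chi_0(0) = 0$ for the principal character), so the left-hand side vanishes, while $a \equiv 0$ is not a primitive root; thus both sides agree and I may assume $a \in \F_p^*$.

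The arithmetic core is that $a \in \F_p^*$ is a primitive root if and only if $a$ is not an $\ell$-th power in $\F_p^*$ for any prime $\ell \mid n$. For a fixed such $\ell$, orthogonality of characters on the cyclic group $\F_p^*$ gives the indicator of ``$a$ is an $\ell$-th power'' as $\ell^{-1}\sum_{\ord \chi \mid \ell}\chi(a)$, where the inner sum runs over the $\ell$ characters of order dividing $\ell$ (that is, of order $1$ or $\ell$), and equals $\ell$ exactly when $a$ lies in the annihilator $(\F_p^*)^\ell$. Since each local event is $\{0,1\}$-valued, the indicator of their intersection is the product of the individual complementary indicators, so the primitive-root indicator equals $\prod_{\ell \mid n}\bigl(1 - \ell^{-1}\sum_{\ord\chi \mid \ell}\chi(a)\bigr)$, the product running over primes $\ell \mid n$.

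Next I would massage each factor. Because $\sum_{\ord\chi \mid \ell}\chi(a) = 1 + \sum_{\ord\chi = \ell}\chi(a)$ (the principal character contributing $1$), each factor rewrites as $\frac{\varphi(\ell)}{\ell}\left(1 - \frac{1}{\varphi(\ell)}\sum_{\ord\chi=\ell}\chi(a)\right)$, and the prefactors collapse to $\prod_{\ell\mid n}(1-\ell^{-1}) = \varphi(n)/n$. Expanding the remaining product over the squarefree divisors $d = \ell_1\cdots \ell_t$ of $n$, each resulting term carries the sign $(-1)^t = \mu(d)$ and the weight $\prod_{j}\varphi(\ell_j)^{-1} = \varphi(d)^{-1}$, by multiplicativity of $\varphi$ on coprime factors.

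The step I expect to be the crux is identifying the character factor $\prod_{j}\bigl(\sum_{\ord\chi_j = \ell_j}\chi_j(a)\bigr)$ with the single sum $\sum_{\ord\chi = d}\chi(a)$. This rests on the fact that, for squarefree $d$, every character of order exactly $d$ factors uniquely as a product $\chi = \prod_j \chi_j$ of characters $\chi_j$ of the distinct prime orders $\ell_j \mid d$, and conversely any such product has order $\mathrm{lcm}(\ell_1,\dots,\ell_t) = d$; since $\chi(a) = \prod_j \chi_j(a)$, the two sums coincide. With this identification the expansion becomes $\frac{\varphi(n)}{n}\sum_{d \mid n,\ d\ \mathrm{squarefree}}\frac{\mu(d)}{\varphi(d)}\sum_{\ord\chi = d}\chi(a)$, and since $\mu(d) = 0$ on non-squarefree $d$ the sum extends harmlessly to all $d \mid n$, yielding precisely the asserted formula.
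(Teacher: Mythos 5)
Your proof is correct. The paper gives no written proof of this lemma at all --- it is stated as a well-known characterisation ``which follows from the inclusion-exclusion principle and the orthogonality property of characters'' with a citation to Lidl--Niederreiter --- and your argument is exactly that standard route written out in full: the expansion of the product of local ``not an $\ell$-th power'' indicators is the inclusion--exclusion, orthogonality gives each local indicator, and you correctly supply the one genuinely delicate point (the unique factorisation of a character of squarefree order $d$ into characters of prime order, matching the $\varphi(d)$ count) as well as the degenerate case $p \mid a$.
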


We also recall the following bound~\cite[Theorem 2.1]{BaGaHBSh}
on short sums of the Legendre symbol $(n/p)$ modulo~$p$.

\begin{lemma}
\label{lem:PosPropNonres} For every $\varepsilon>0$ there exists $\delta>0$
such that, for all sufficiently large primes $p$, the bound
$$
\left|\sum_{n\le N}(n/p)\right|\le(1-\delta)N
$$
holds for all integers $N$ in the range $p^{1/(4\sqrt{e})+\varepsilon}\le
N\le p$.
\end{lemma}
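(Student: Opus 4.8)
We may assume $p^{1/(4\sqrt e)+\varepsilon}\le N\le p^{1/4}$, since for $N\ge p^{1/4+\varepsilon}$ an application of Lemma~\ref{lem:PVB} (with a suitable $\nu$) already gives $\sum_{n\le N}(n/p)=o(N)$, which is far stronger than required. Write $\chi$ for the quadratic character modulo $p$, so that $\chi(n)=(n/p)$, and set $R=\#\{n\le N:\chi(n)=1\}$ and $S=\#\{n\le N:\chi(n)=-1\}$, so that $\sum_{n\le N}(n/p)=R-S$ and $R+S=N$. The claimed bound $|R-S|\le(1-\delta)N$ is then equivalent to the two assertions $R\ge\tfrac12\delta N$ and $S\ge\tfrac12\delta N$; that is, both residues and non-residues occupy a positive proportion of $[1,N]$. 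The arithmetically substantial direction, and the one that forces the exponent $1/(4\sqrt e)$, is the lower bound $S\gg_\varepsilon N$ for the non-residues.

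The plan for $S\gg_\varepsilon N$ is to transfer cancellation from the Burgess range down to scale $N$ through the primes (here and below $\ell$ denotes a prime). Fix $M=p^{1/4+\varepsilon'}$ with $\varepsilon'>0$ small and apply Lemma~\ref{lem:PVB} to obtain $\sum_{n\le M}\chi(n)=o(M)$, so that there are $(\tfrac12+o(1))M$ non-residues up to $M$. Every such non-residue has a prime factor $\ell$ with $\chi(\ell)=-1$, necessarily $\ell\le M$, and summing the trivial bound over these primes gives
$$
\Big(\frac12+o(1)\Big)M=\#\{n\le M:\chi(n)=-1\}\le\sum_{\substack{\ell\le M\\ \chi(\ell)=-1}}\Big\lfloor\frac M\ell\Big\rfloor\le M\sum_{\substack{\ell\le M\\ \chi(\ell)=-1}}\frac1\ell ,
$$
whence $\sum_{\ell\le M,\ \chi(\ell)=-1}1/\ell\ge\frac12-o(1)$. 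To see that most of this mass lies below $N$, use Mertens' theorem in the form
$$
\sum_{N<\ell\le M}\frac1\ell=\log\frac{\log M}{\log N}+o(1);
$$
since $\log M=(\tfrac14+\varepsilon')\log p$ while $\log N\ge(\tfrac1{4\sqrt e}+\varepsilon)\log p$, choosing $\varepsilon'$ small makes $\log M/\log N$ strictly smaller than $\sqrt e=e^{1/2}$, so this tail is at most $\tfrac12-\eta$ for some $\eta=\eta(\varepsilon)>0$. Subtracting the tail from the full sum yields the key estimate
$$
\sum_{\substack{\ell\le N\\ \chi(\ell)=-1}}\frac1\ell\ \ge\ \Big(\frac12-o(1)\Big)-\Big(\frac12-\eta+o(1)\Big)\ \ge\ \frac\eta2\ >\ 0 .
$$
Finally I would convert this positive reciprocal sum of non-residue primes into positive density: by the standard mean-value bounds for multiplicative functions (Hal\'asz and Wirsing), a lower bound on $\sum_{\ell\le N}(1-\chi(\ell))/\ell=2\sum_{\ell\le N,\ \chi(\ell)=-1}1/\ell$ forces $\frac1N\sum_{n\le N}\chi(n)\le\exp(-c\eta)<1$, i.e.\ $S\gg_\varepsilon N$.

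For the complementary bound $R\gg N$ (equivalently $\sum_{n\le N}(n/p)\ge-(1-\delta)N$) I would appeal to the soft side of the same theory: a real completely multiplicative function with values in $\{-1,1\}$ has average $\frac1N\sum_{n\le N}\chi(n)$ bounded below by an absolute constant exceeding $-1$, because $\chi(1)=1$ and the associated Euler product $\prod_{\ell\le N}(1-1/\ell)(1-\chi(\ell)/\ell)^{-1}$ is positive; concretely one bounds $R$ below by the count of $n\le N$ having an even number of non-residue prime factors (with multiplicity). This direction does not see the exponent $1/(4\sqrt e)$ and holds throughout the whole range.

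The main obstacle is the descent in the non-residue direction. Burgess (Lemma~\ref{lem:PVB}) is non-trivial only for intervals of length $p^{1/4+o(1)}$, so one cannot estimate $\sum_{n\le N}(n/p)$ directly when $N<p^{1/4}$, and the argument must route the cancellation through the primes via the Mertens balance above. That balance is genuinely tight: the tail $\sum_{N<\ell\le M}1/\ell$ equals exactly $\tfrac12$ when $N=p^{1/(4\sqrt e)}$, which is both why the method reaches the exponent $1/(4\sqrt e)$ and why, as remarked in the introduction, one cannot go below it without improving the Burgess bound. Care is also needed to make the passage from the prime reciprocal-sum bound to a density statement uniform in $p$, so that a single $\delta=\delta(\varepsilon)$ works for all admissible $N$ and all sufficiently large $p$ simultaneously.
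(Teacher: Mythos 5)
First, a point of comparison: the paper does not prove this lemma at all --- it is quoted verbatim from~\cite[Theorem~2.1]{BaGaHBSh}, so there is no internal proof to measure you against, and your attempt has to stand on its own. Its first half does stand: the reduction to $N$ below the Burgess range, the descent through the primes (Burgess at $M=p^{1/4+\varepsilon'}$ gives $(\tfrac12+o(1))M$ non-residues, each divisible by a non-residue prime, whence $\sum_{\ell\le M,\,\chi(\ell)=-1}1/\ell\ge\tfrac12-o(1)$), the Mertens computation of the tail $\sum_{N<\ell\le M}1/\ell=\log(\log M/\log N)+o(1)<\tfrac12-\eta$, and the resulting mass $\sum_{\ell\le N,\,\chi(\ell)=-1}1/\ell\ge\eta/2$ are all correct, and this Vinogradov-type balance is indeed the mechanism that produces the exponent $1/(4\sqrt{e})$, exactly as the paper's introduction remarks. (Minor point: Lemma~\ref{lem:PVB} is nontrivial only for $N\ge p^{1/4+\kappa}$, so your opening reduction should be to $N\le p^{1/4+\kappa}$, not $N\le p^{1/4}$.)

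The two finishing steps, however, are genuine gaps, and they are precisely where the substance of the cited theorem lies. (a) The Hal\'asz--Hall--Tenenbaum bounds you invoke have the shape $\sum_{n\le N}\chi(n)\le C\,N\exp\bigl(-K\sum_{\ell\le N}(1-\chi(\ell))/\ell\bigr)$ with absolute constants $C>1$ and $K<1$; for a small fixed mass $\eta$ the right-hand side exceeds $N$, so the bound is vacuous and does not yield ``mean $\le\exp(-c\eta)$'' as you assert --- there is no constant-free version. Converting a fixed positive reciprocal mass of non-residue \emph{primes} below $N$ into a positive \emph{density} of non-residue integers below $N$ is the hard point: the elementary route (count $n=\ell m\le N$ with $\chi(\ell)=-1$, $\chi(m)=1$) needs a positive proportion of residues $m$ at the scales $N/\ell$, which is exactly your other, equally unproven, half --- as sketched, the two halves feed on each other circularly, and naive inclusion--exclusion loses a factor $\log N$ or worse when the prime mass concentrates near $N$. (b) For the lower direction, ``the Euler product is positive'' controls nothing about partial sums, and ``bound $R$ below by the $n$ with an even number of non-residue prime factors'' merely restates the definition of $R$. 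The fact actually needed --- that a completely multiplicative $\pm1$ function has mean value bounded below by an absolute constant exceeding $-1$ (the minimum is $-0.6569\ldots+o(1)$, by the Granville--Soundararajan spectrum theorem) --- is a deep result, not a soft observation; cheap substitutes fail, e.g.\ pairing $n\leftrightarrow qn$ with $q$ the least non-residue gives only $R(N)\gg N/q$, useless since $q$ may be nearly of size $N$ at the bottom of the range. So: correct and well-motivated skeleton, but both terminal steps require the Hildebrand/Granville--Soundararajan mean-value machinery stated precisely and made uniform in $p$ --- which is essentially the content of~\cite{BaGaHBSh} --- or, in the context of this paper, simply the citation the authors give.
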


Finally, for our result on Hilbert cubes avoiding primitive roots, we
make use of a recent result by Schoen~\cite[Theorem~3.3]{Sch}
in additive combinatorics.
Note that his result is actually only stated
for subset sums rather than Hilbert cubes, but 
this slight generalisation follows immediately.

\begin{lemma}
\label{addcomb}
For any $a_0 \in \F_p$ and pairwise distinct $a_1, \ldots, a_d \in \F_p$
such that $d \ge 8(p/\log p)^{1/D}$, where $D$ is an integer satisfying
$$
0<D\le \sqrt{\frac{\log p}{2 \log \log p}},
$$  
the Hilbert cube~\eqref{hilbert_cube} contains an arithmetic progression
of length $L$ where
$$
  L \ge 2^{-10} (d/\log p)^{1+1/(D-1)}.
$$
\end{lemma}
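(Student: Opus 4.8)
The plan is to obtain the lemma as an immediate consequence of Schoen's subset-sum theorem~\cite[Theorem~3.3]{Sch}, exploiting the fact that a Hilbert cube is nothing but a translate of a set of subset sums, and that an affine translation of $\F_p$ maps arithmetic progressions to arithmetic progressions of the same length and common difference. Thus the entire task reduces to matching hypotheses and transporting a progression across a translation.

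First I would introduce the set of subset sums
$$
\Sigma = \left\{\sum_{i=1}^d \vartheta_i a_i : \vartheta_i \in \{0,1\}\right\},
$$
so that by the definition~\eqref{hilbert_cube} we have the identity $\cH(a_0; a_1, \ldots, a_d) = a_0 + \Sigma$. Schoen's result is stated precisely for such a set $\Sigma$ formed from pairwise distinct summands: under our hypotheses $d \ge 8(p/\log p)^{1/D}$ and $0 < D \le \sqrt{\log p/(2\log\log p)}$, it produces an arithmetic progression of length $L \ge 2^{-10}(d/\log p)^{1+1/(D-1)}$ lying inside $\Sigma$.

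It then remains only to transport this progression into the Hilbert cube. If $\{c, c+q, \ldots, c+(L-1)q\} \subseteq \Sigma$ is the progression guaranteed by Schoen's theorem, then applying the map $x \mapsto a_0 + x$ yields $\{a_0+c, a_0+c+q, \ldots, a_0+c+(L-1)q\}$, which is again an arithmetic progression of length $L$ with the same common difference $q$, and which lies in $a_0 + \Sigma = \cH(a_0; a_1, \ldots, a_d)$. This gives the asserted lower bound on $L$ and completes the argument.

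The only genuine point of care — and the reason the generalisation from subset sums to Hilbert cubes is worth recording at all — is to confirm that Schoen's hypotheses are met verbatim. The pairwise distinctness of $a_1, \ldots, a_d$ is exactly the condition under which~\cite[Theorem~3.3]{Sch} controls the additive structure of $\Sigma$, while the admissible range of $D$ and the lower bound on $d$ are inherited directly from the cited result. Since translation by $a_0$ is a bijection of $\F_p$ that alters neither the length nor the common difference of a progression, no collapsing or wrap-around can occur; the reduction is therefore completely routine, and the passage to Hilbert cubes is indeed immediate, as asserted in the text preceding the statement.
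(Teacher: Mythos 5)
Your proposal is correct and matches the paper's own treatment: the authors do not give a separate proof but remark that the generalisation of Schoen's subset-sum theorem~\cite[Theorem~3.3]{Sch} to Hilbert cubes ``follows immediately,'' precisely via the observation that $\cH(a_0; a_1, \ldots, a_d)$ is the translate $a_0 + \Sigma$ of the set of subset sums and that translation preserves arithmetic progressions. Your write-up simply makes this one-line reduction explicit, which is exactly the intended argument.
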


\section{Double Multiplicative Character Sums}
\label{sec:CharSums}

In the following, for a fixed prime $p$ we write $r$ for the non-negative
positive integer such that $2^r < p \le 2^{r+1}$.
Given integers $n\in [1,p]$, $k \in [1,r]$ and $l \le k$,
we denote by $\cU_{k,\ell}(n)$ the set of positive integers $u < 2^k$
whose binary expansions differ from the  $k$ most significant binary digits 
of $n$ in exactly $\ell$ positions (if necessary,
we append some leading zeros to the
binary expansion of $n$ to guarantee that it is of length $r+1$).
Furthermore, for $m \le r-k$ we denote by 
$\cV_{k,m}(n)$ the set of positive integers $v < 2^{r-k+1}$
whose binary expansions differ from the  $r-k+1$ least binary digits 
of $n$ in exactly $m$ positions.

Obviously,
\begin{equation}
\label{eqn:UV}
\# \cU_{k,\ell}(n) = \binom{k}{\ell} \mand \# \cV_{k,m}(n)= \binom{r-k}{m}.
\end{equation}

Clearly the binary expansion of 
any integer of the shape $u2^{r-k+1} + v$, $u \in \cU_{k,\ell}(n)$, $v \in \cV_{k,m}(n)$
differs from the binary expansion of $n$ in exactly $\ell + m$ positions. 
This suggests to consider the following double sum  
with a  multiplicative character $\chi$ of $\F_p^*$:
$$
S_n(k,\ell,m; \chi) = \sum_{u \in \cU_{k,\ell}(n)}\, \sum_{v \in \cV_{k,m}(n)}\chi(u2^{r-k+1} + v).
$$

We note that the following result is slightly more precise than 
a bound of  Karatsuba~\cite{Kar1} (see also~\cite[Chapter~VIII, Problem~9]{Kar2})
that applies to double character sums over arbitrary sets.

\begin{lem}
\label{lem:Sum general} In the notation from above, for any non-trivial
multiplicative character $\chi$ of $\F_p^*$ and any
positive integer $\nu$, we have
\begin{equation*}
\begin{split}
|S_n(k,\ell,m; \chi)|\ll &\( \# \cU_{k,\ell}(n)\)^{(2\nu-1)/2\nu} \(\# \cV_{k,m}(n)\)^{1/2} 2^{k/2\nu }\\
 &\qquad \quad + 
\(\# \cU_{k,\ell}(n)\)^{(2\nu-1)/2\nu} \# \cV_{k,m}(n) 2^{r/4\nu}(\log p)^{1/2\nu}. 
\end{split}
\end{equation*}
\end{lem}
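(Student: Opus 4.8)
The plan is to bound $S_n(k,\ell,m;\chi)$ by a H\"older-plus-Weil argument of the type used in~\cite{BaCoSh,OstShp}. Write $|U|=\#\cU_{k,\ell}(n)$ and $|V|=\#\cV_{k,m}(n)$, and abbreviate $L=2^{r-k+1}$, so that $S_n(k,\ell,m;\chi)=\sum_{u\in\cU_{k,\ell}(n)}\sum_{v\in\cV_{k,m}(n)}\chi(uL+v)$ with every $u$ lying in $[0,2^k)$. First I would treat the outer variable by H\"older's inequality with exponent $2\nu$, obtaining
$$
|S_n(k,\ell,m;\chi)|^{2\nu}\le |U|^{2\nu-1}\sum_{u\in\cU_{k,\ell}(n)}\left|\sum_{v\in\cV_{k,m}(n)}\chi(uL+v)\right|^{2\nu}.
$$
Since the summand is non-negative, the sum over $u\in\cU_{k,\ell}(n)$ may be extended to all integers $u$ in the interval $[0,2^k)$, which is exactly what permits a Weil-type bound over a complete stretch of $u$.

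Next I would expand the $2\nu$-th power and interchange the order of summation. Writing the inner power as a sum over tuples $v_1,\dots,v_\nu,w_1,\dots,w_\nu\in\cV_{k,m}(n)$ and using $\overline\chi(z)=\chi(z^{p-2})=\chi(z^{-1})$ to absorb the conjugated factors, the $u$-sum becomes $\sum_{u=0}^{2^k-1}\chi(F_{\mathbf v,\mathbf w}(u))$, where $F_{\mathbf v,\mathbf w}(U)=\prod_{i=1}^\nu(UL+v_i)\prod_{j=1}^\nu(UL+w_j)^{p-2}\in\Fp[U]$ agrees, as a function of $u$, with $\chi$ of the monic ratio $\prod_i(UL+v_i)/\prod_j(UL+w_j)$ off the at most $\nu$ values of $u$ where the denominator vanishes. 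This polynomial has at most $2\nu$ distinct roots, and since $m\mid p-1$ we have $p-2\equiv-1\pmod m$, so the exponent at a root $-c/L$ is congruent to the net multiplicity $e_c=\#\{i:v_i=c\}-\#\{j:w_j=c\}$. Here I would split the tuples into two classes: those for which all $e_c\equiv0\pmod m$, so that the ratio is a genuine $m$-th power and $\chi(F_{\mathbf v,\mathbf w}(u))\equiv1$, giving inner sum at most $2^k$; and the remaining tuples, for which $F_{\mathbf v,\mathbf w}$ is not the $m$-th power of a rational function and Lemma~\ref{lem:Weil-Char} applies, giving a contribution $\ll\nu p^{1/2}\log p$ each, hence $\ll|V|^{2\nu}p^{1/2}\log p$ in total.

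The crux of the argument is the count of the exceptional tuples for which every $e_c\equiv0\pmod m$. Because $m\ge2$, any value $c$ occurring among the $2\nu$ entries $v_1,\dots,v_\nu,w_1,\dots,w_\nu$ must then occur with total multiplicity at least $2$: if it occurred exactly once, its net multiplicity would be $\pm1$, which is not divisible by $m$. Consequently the $2\nu$ entries take at most $\nu$ distinct values, so the number of such tuples is $\ll|V|^\nu$, with implied constant depending only on $\nu$. These tuples therefore contribute $\ll|V|^\nu 2^k$, and combining the two classes yields
$$
|S_n(k,\ell,m;\chi)|^{2\nu}\ll |U|^{2\nu-1}\left(|V|^\nu 2^k+|V|^{2\nu}p^{1/2}\log p\right).
$$

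Finally I would take $2\nu$-th roots, use $(X+Y)^{1/2\nu}\ll X^{1/2\nu}+Y^{1/2\nu}$, and bound $p^{1/4\nu}\ll2^{r/4\nu}$ via $p\le2^{r+1}$; this produces the two stated terms $(\#\cU_{k,\ell}(n))^{(2\nu-1)/2\nu}(\#\cV_{k,m}(n))^{1/2}2^{k/2\nu}$ and $(\#\cU_{k,\ell}(n))^{(2\nu-1)/2\nu}\#\cV_{k,m}(n)\,2^{r/4\nu}(\log p)^{1/2\nu}$. I expect the only delicate step to be the bookkeeping in the count of $m$-th-power tuples, which must be uniform in the a priori unknown order $m$ of $\chi$: the argument above covers both the generic range $m>\nu$, where only the diagonal $\{v_i\}=\{w_j\}$ survives, and the range $m\le\nu$, where genuinely non-diagonal perfect powers occur but are still confined to $\ll|V|^\nu$ tuples.
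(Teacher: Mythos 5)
Your proposal is correct and takes essentially the same route as the paper's own proof: H\"older with exponent $2\nu$ in the $u$-variable, completion of $u$ to the full range $[0,2^k)$, expansion into tuples $(v_1,\ldots,v_\nu,w_1,\ldots,w_\nu)$, the bound of Lemma~\ref{lem:Weil-Char} for the non-degenerate tuples, and the observation that degenerate tuples (those making the polynomial an $m$-th power of a rational function) must repeat every occurring value, hence involve at most $\nu$ distinct values and number $O\bigl((\#\cV_{k,m}(n))^{\nu}\bigr)$ with the implied constant depending only on $\nu$. Your explicit congruence argument via $p-2\equiv -1 \pmod m$ is just a more detailed justification of the degeneracy criterion the paper states.
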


\begin{proof}
Let $K = 2^k$. 
By the H\" older inequality, we have
\begin{eqnarray*}
\lefteqn{\left|S_n(k,\ell,m; \chi) \right|^{2\nu}} \nonumber\\
&  & \le  \#\cU_{k,\ell}(n)^{2\nu-1}
\sum_{u=0}^{K-1}\left|\sum_{v\in\cV_{k,m}(n)}\chi\(u2^{r-k+1}+v\)\right|^{2\nu}
\\
& &=  \#\cU_{k,\ell}(n)^{2\nu-1}
\sum_{u=0}^{K-1}  \sum_{\substack{v_1,\ldots, v_{\nu}\in\cV_{k,m}(n)\\ w_1,\ldots, w_{\nu}\in\cV_{k,m}(n)}}\prod_{i=1}^{\nu}\chi\(u2^{r-k+1} + v_i\)
\overline\chi\(u2^{r-k+1} + w_i\), \\
\end{eqnarray*}
therefore,
\begin{equation}
\label{eq:Prelim}
\begin{split}
\left|S_n(k,\ell,m; \chi) \right|&^{2\nu} 
  \le \#\cU_{k,\ell}(n)^{2\nu-1} \sum_{\substack{v_1,\ldots, v_{\nu}\in\cV_{k,m}(n)\\ 
w_1,\ldots, w_{\nu}\in\cV_{k,m}(n)}}\\
& \qquad \qquad \qquad \quad \left|\sum_{u=0}^{K-1}\prod_{i=1}^{\nu}
\chi\((u2^{r-k+1} + v_i)(u2^{r-k+1} + w_i)^{p-2}\)\right|.
\end{split}
\end{equation}

We note that if the polynomial
\begin{equation}
\label{eqn:Funct}
\prod_{j=1}^\nu (2^{r-k+1}U+v_j)(2^{r-k+1}U+w_j)^{p-2} \in \F_p[U]
\end{equation}
is a power of another rational function, then every value
that occurs in the sequence $v_1, \ldots, v_\nu$ and in the sequence
$w_1, \ldots, w_\nu$ occurs with multiplicity at least $2$. Thus, the set of such $v_1,\ldots,v_{\nu},w_1,\ldots,w_{\nu}$ takes at most $\nu$ distinct values.

We can assume that $r-k > m$ since otherwise the result is trivial.
So, 
$$
\# \cV_{k,m}(n) = \binom{r-k}{m} \ge 2.
$$
Therefore,   there are at most 
$$ \binom{r-k}{m}+ \binom{r-k}{m}^2 + \cdots + 
\binom{r-k}{m}^\nu \le 2 \binom{r-k}{m}^\nu
$$
subsets of $\cV_{k,m}(n)$ with at most $\nu$ elements. 
When such a subset with $h\le \nu$ elements is fixed, we can obtain  the case
described above by placing its elements into $2\nu$ positions.
This can be done in 
no more than $(2\nu)^h \le (2\nu)^\nu$ ways.
So we have at most $2(2\nu)^\nu \binom{r-k}{m}^\nu$ 
possibilities for vectors $(v_1, \ldots, v_\nu)$ and $(w_1,\ldots,w_{\nu})$ 
such that the polynomial~\eqref{eqn:Funct} is a power of some other rational function. 
Using now Lemma~\ref{lem:Weil-Char} when the rational 
function~\eqref{eqn:Funct} is not a power of another 
rational function, we can estimate
\begin{eqnarray*}
\lefteqn{\sum_{\substack{v_1,\ldots, v_{\nu}\in\cV_{k,m}(n)\\ w_1,\ldots, w_{\nu}\in\cV_{k,m}(n)}}\left|\sum_{u=0}^{K-1}\prod_{i=1}^{\nu}
\chi\((u2^{r-k+1} + v_i)(u2^{r-k+1} + w_i)^{p-2}\)\right|}\\
&&\qquad \qquad\qquad\qquad
\ll \binom{r-k}{m}^{\nu} K+\binom{r-k}{m}^{2\nu}p^{1/2}\log p\\
&&\qquad \qquad\qquad\qquad
=  \#\cV_{k,m}(n)^{\nu} 2^{k} +  \#\cV_{k,m}(n)^{2\nu}p^{1/2}\log p.
\end{eqnarray*}
Recalling~\eqref{eq:Prelim}, we obtain the desired result.
\end{proof}

\section{Proof of Theorem~\ref{thm:Deltap}}
\label{sec4}

We fix some $\rho >\rho_0$ (with $\rho<1/2$)
where $\rho_0$ is the root of the equation~\eqref{eq:rho},  
and some positive
$$
\varepsilon < 1 - \frac{1}{2 H(\rho)}.
$$
We define
$$
k = \fl{(1-\varepsilon)r},\quad  \ell = \fl{\rho k},
\quad m = \fl{0.5(r-k)}.
$$

We now show that  
there exists some $\delta > 0$ such that for any nontrivial
multiplicative character $\chi$ of $\F_p^*$, we have
\begin{equation}
\label{eq:Sklm}
S_n(k,\ell,m; \chi) \ll \# \cU_{k,\ell}(n) \# \cV_{k,m}(n)  p^{-\delta}. 
\end{equation}

Using Lemma~\ref{lem:Sum general} and recalling~\eqref{eqn:UV} we see that 
in order to
establish~\eqref{eq:Sklm}, it is enough to show that for an appropriately chosen $\nu$ 
and some $\eta> 0$ that depends only on $\rho$ and $\varepsilon$, we have
\begin{equation}
\label{eq:Cond V}
\# \cV_{k,m}(n) \ge  2^{(1+\eta)k/2\nu }
\end{equation}
and
\begin{equation}
\label{eq:Cond U}
\# \cU_{k,\ell}(n) \ge 2^{r/2} p^{2\nu\delta} \log p.
\end{equation}

Since by
our choice of parameters,~\eqref{eqn:UV} and Lemma~\ref{lem:Binom} we have 
$$
\# \cV_{k,m}(n)  = 2^{r-k + o(r)} = 2^{\varepsilon r + o(r)},
$$
the bound~\eqref{eq:Cond V} is immediate for all sufficiently 
large $\nu$.

Having fixed $\nu$, we now note that
by~\eqref{eqn:UV} and
Lemma~\ref{lem:Binom} in order to establish~\eqref{eq:Cond U} 
for sufficiently small $\delta>0$,
it is enough to verify that 
$$
H(\rho)(1-\varepsilon)> 1/2,
$$
which holds because of our choice of $\varepsilon$.

We now see that the bound~\eqref{eq:Sklm} holds.
Using  Lemma~\ref{lem:prim root} we now estimate the number $P$ 
of primitive roots modulo $p$
in the set 
$$
\cQ(k,l,m)=\{u2^{r-k+1}+v:u \in \cU_{k,l}(n), v \in \cV_{k,m}(n)\}.
$$
We have
\begin{eqnarray*}
P&=&\sum_{n \in \cQ(k,l,m)}\frac{\varphi(p-1)}{p-1}\sum_{d \mid p-1} 
\frac{\mu(d)}{\varphi(d)}\sum_{\ord \chi=d}\chi(n)\\
&=&\frac{\varphi(p-1)}{p-1}\sum_{d \mid p-1} 
\frac{\mu(d)}{\varphi(d)}\sum_{\ord \chi=d} \sum_{n \in \cQ(k,l,m)}\chi(n)\\
&=&\frac{\varphi(p-1)}{p-1}\# \cQ(k,l,m)+
\frac{\varphi(p-1)}{p-1}\sum_{\substack{d>1:\\ d \mid p-1}} 
\frac{\mu(d)}{\varphi(d)}\sum_{\ord \chi=d} \sum_{n \in \cQ(k,l,m)}\chi(n)\\
&=&\frac{\varphi(p-1)}{p-1}\# \cQ(k,l,m)+O\(
\frac{\varphi(p-1)}{p-1}\sum_{\substack{d>1:\\ d \mid p-1}} 
\frac{1}{\varphi(d)}\sum_{\ord \chi=d} \frac{\# \cQ(k,l,m)}{p^{\delta}}\)\\
&=&\frac{\varphi(p-1)}{p-1}\# \cQ(k,l,m)+
O\left(\sum_{\substack{d>1:\\ d \mid p-1}} 
\frac{1}{\varphi(d)}\sum_{\ord \chi=d} \frac{\# \cQ(k,l,m)}{p^{\delta}}\right)\\
&=&\frac{\varphi(p-1)}{p-1}\# \cQ(k,l,m)+
O\left(\frac{\# \cQ(k,l,m)}{p^{\delta}}\sum_{d \mid p-1} 1\right).
\end{eqnarray*}

Using the well-known bound
$$
\varphi(n) \gg \frac{n}{\log \log (n+2)}    \mand
\sum_{d \mid p-1} 1 = p^{o(1)}
$$
we derive
$$
P \gg  \frac{\# \cQ(k,l,m)}{\log \log (p+1)}.
$$

Hence we conclude that for sufficiently large $p$
the set $\cQ (k,l,m)$ indeed contains primitive roots.
Therefore, for a sufficiently
large $p$ we have 
$$\Delta_p \le m + \ell \le (\rho + \varepsilon/2) r. 
$$
Since $\rho> \rho_0$ and $\varepsilon >0$ are arbitrary 
we obtain the desired result.

\section{Proof of Theorem~\ref{thm:SpareseNonRes}}

We fix some sufficiently small  $\varepsilon>0$ and 
put 
$$
N = \rf{p^{1/(4\sqrt{e})+\varepsilon}}.
$$
By Lemma~\ref{lem:PosPropNonres} there exists $\delta>0$
such that the interval $[1,N]$ contains at least $0.5\delta N$
quadratic non-residues modulo $p$. 
Let 
$$
s = \rf{\frac{\log N}{\log 2}} \mand w = \fl{\(\frac{1}{2} +\varepsilon\) s}.
$$
Note that
\begin{equation}
\label{eq:w and r}
\frac{w}{r} = \(\frac{1}{2} +\varepsilon\)\(\frac{1}{4\sqrt{e}}+\varepsilon\) +o(1),
\end{equation}
as $p\to\infty$. 

Making use of the well-known property that the binary entropy function
has maximum $H(1/2)=1$ and is strictly smaller than $1$ outside $1/2$,
we conclude that
the number of positive integers $n \le N$ with Hamming weight at least $w$,
by  Lemma~\ref{lem:Binom},
does not exceed
$$
\sum_{k=w}^s \binom{s}{k} \le s \binom{s}{w} \le s 2^{sH(w/s)+ o(s)}  
\le s 2^{s H(1/2 +\varepsilon)+ o(s)}\ll 2^{\eta s} \ll N^{\eta}, 
$$
where $\eta < 1$ depends only on $\varepsilon$.
Therefore, there is a quadratic non-residue $n \le N$ of 
 Hamming weight at most $w-1$. Recalling~\eqref{eq:w and r},
 since $\varepsilon$ is arbitrary, we obtain the desired estimate 
 on $w_p$.

\section{Proof of Theorem~\ref{thm:HilbCube}}

We fix some $\varepsilon > 0$ and let $d = \rf{p^{1/5+\varepsilon}}$.
Then by Lemma~\ref{addcomb} with $D=5$, for any
$a_0, a_1, \ldots, a_d \in \F_p$  with 
pairwise distinct $a_1, \ldots, a_d$, 
the set $\cH(a_0; a_1, \ldots, a_d)$ contains  an arithmetic progression
$an + b$, $n =1, \ldots, N$
of length
$$
   N \gg \frac{p^{(1/5+\varepsilon)\times (5/4)}}{(\log p)^{5/4}}
  \gg p^{1/4+\varepsilon}.
$$
In particular $a \ne 0$, so $a$ has an inverse $\overline{a}$ in
$\F_p$.

Thus for any non-principal
multiplicative character $\chi$ of $\F_p^*$, by Lemma~\ref{lem:PVB} 
we have
$$
\sum_{n=1}^N \chi(an + b) = \chi(a) \sum_{n=1}^N \chi(n + \overline{a}b) 
\ll Np^{-\eta},
$$
where $\eta > 0$ depends only on $\varepsilon$. 
Using  Lemma~\ref{lem:prim root}, since $\varepsilon>0$ is arbitrary, we conclude the proof in the same way as
in Section~\ref{sec4}.

\section{Remarks, Experimental Results and Open\newline Problems}

It is certainly natural to expect that $\Delta_p =o(r)=o(\log p)$ 
which follows,
for example, from 
the standard conjectures about gaps between consecutive primitive roots. 
But possibly it is a little easier to prove.

On the other hand, we do not have any nontrivial lower bounds on $\Delta_p$
apart from the following simple observations: 

\begin{enumerate}
\item
By Dirichlet's Theorem, 
asymptotically half of the primes satisfy 
$p \equiv \pm 1 \mod 8$. For these primes $p$,
the residue $2$ is a quadratic residue and therefore no 
primitive root modulo $p$.
Thus also no power of $2$ is a primitive root, therefore there is no $n$
having Hamming distance $1$ to $0$ such that $n$ is a primitive root
modulo $p$. Hence $\Delta_p \ge 2$ for all 
primes $p\equiv \pm 1 \pmod 8$.
\item
On the other hand, a quantitative version of Artin's conjecture
on primitive roots says that the proportion of primes with $2$ as a primitive
root is given by the {\it Artin constant:\/}
\begin{equation}
\label{eq:artin}
A = \prod_{p~\mathrm{prime}} \left(1-\frac{1}{p(p-1)} \right)=0.3739558\ldots.
\end{equation}
This has been confirmed by Hooley~\cite{Hooley}
on the assumption of a certain extension of the Riemann Hypothesis.
Furthermore, Vinogradov~\cite{Vin} has shown unconditionally that the proportion is 
at most $A$, see also a very short proof of this by Wiertelak~\cite{Wier}.
 For a survey on
Artin's original conjecture and its modified version see~\cite{Moree}.

The primes with $2$ as a primitive root 
are precisely those primes with
$W_p=1$. Hence the proportion of these primes is expected to be 0.3739...
Similarly, the odd primes with $(\frac{2}{p})=-1$
are precisely those primes with $w_p$=1, and their asymptotic density
therefore is $1/2$.

\item
Computationally, for most primes $p \leq 10^6$ one has $\Delta_p=2$.
We list the number of primes for $p \leq 10^3, 10^4, 10^5, 10^6$
according to their value of $w_p, W_p$ and $\Delta_p$ (for $w_p$,
only odd $p$ are considered). As usual, $\pi(x)$ 
denotes the number of primes up to $x$.

 \begin{center}
{\it Count of primes}\\
 \vskip 5pt
\scriptsize
\begin{tabular}{|r|r|rrr|rrr|rrr|}
\hline
$j$&$\pi(10^j)$&
$w=1$& $W=1$& $\Delta=1$&
$w=2$& $W=2$& $\Delta=2$&
$w=3$& $W=3$& $\Delta=3$\\ \hline
$3$ & $168$ & $87$ & $68$ & $12$ & $80$ & $100$ & $153$ & $0$ & $0$ & $3$\\ \hline
$4$ & $1229$ &$625$ &$471$ & $75$ & $603$ & $756$ & $1147$ & $0$ & $2$ & $7$\\ \hline
$5$&$9592$&$4808$ &$3604$&$508$ & $4783$ & $5985$ &$9075$ & $0$ & $3$ & $9$\\ \hline
$6$ & $78498$ & $39276$ &$29342$ & $3915$ & $39221$ & $49145$ & $74565$ & $0$ & $11$ & $18$\\ \hline
\end{tabular}
\end{center}

Observe that (for example) $39276+39221+0=\pi(10^6)-1$ 
(we have $-1$  as $p=2$ is 
omitted from consideration).

As an example for the comments above we note that
$$\frac{39276}{78498}\approx
0.500344 \mand \frac{29342}{78498}\approx 0.373792
$$ 
are  very close to $1/2$ and  
the Artin constant $A = 0.3739558\ldots$ given by~\eqref{eq:artin}, respectively.

\item
A computer search for $p \le 3,000,000$ has produced $24$ primes $p$ with
$\Delta_p=3$, but none with $\Delta_p \ge 4$.
We list a table of these $24$ primes  
and all classes $a$ such that
the Hamming distance between $a$ and the closest primitive root 
 is at distance $3$.
 \begin{center}
{\it Primes with $\Delta_p=3$}\\
 \vskip 5pt
\begin{tabular}{|r|l|}
\hline
$p$ & \text{residue classes $a$}\\ \hline
$17$ & $0$, $16$\\ \hline
$67$ & $0$, $1$, $65$\\ \hline
$257$ & $0$, $256$\\ \hline
$1753$ & $0$\\ \hline
$2089$ & $0$\\ \hline
$8209$ & $0$, $8196$\\ \hline
$8233$ & $0$, $8226$\\ \hline
$65537$ & $0$, $65536$\\ \hline
$77351$ & $0$\\ \hline
$111439$ & $0$\\ \hline
$114001$ & $0$\\ \hline
$164449$ & $0$\\ \hline
$239713$ & $0$\\ \hline
$262153$ & $0$, $262144$\\ \hline
$514711$ & $0$\\ \hline
$924841$ & $0$\\ \hline
$929671$ & $0$\\ \hline
$947911$ & $0$\\ \hline
$1316041 $ & $0$\\ \hline
$1894369$ & $0$\\  \hline
$2097169$ & $0$, $2097152$\\ \hline
$2236879$ & $0$\\ \hline
$2493721$ & $0$\\ \hline
$2743711$ & $0$\\ \hline 
\end{tabular}
\end{center}
We note that all the corresponding residue classes are close to either end
of the set of residues. Some of these cases can be explained by observing 
that often $p$ or the corresponding class are close to a power of 2. Then
a small shift may lead to an extra carry of the leading bit.

\item
It is well-known that, assuming the GRH, for some constant $C>0$  and $L = C (\log p)^2$, 
there are at least $0.4 L/\log L$ primes $\ell < L$ that are quadratic non-residues 
modulo $p$, see~\cite[Chapter~13]{Mon}. 
The same counting argument as in the proof of Theorem~\ref{thm:SpareseNonRes}
implies that in this case $w_p \le (1 + o(1)) \log r$. With more work, one 
   can also improve the bound $W_p \le (6 + o(1)) \log r$ that follows under the
GRH from the  Shoup~\cite{Shoup} bound~\eqref{ShoupBound}. This in turn may 
lead to constructions of small sets  $\cG_p$ that are guaranteed to have a 
primitive root modulo $p$.
Note that finding this primitive root requires full factorisation of $p-1$, 
however in several applications (for example, in cryptography or 
combinatorics), one can simply consecutively use all elements of such sets;
see~\cite{Shoup,Shp0} for some related results.

\end{enumerate}

We leave the following questions as {\it open problems\/} for further study.
Numerous similar
questions could be analogously stated.

\begin{ques} Investigate whether one can improve the bound in
Theorem~\ref{thm:Deltap}, assuming the GRH.
\end{ques}

\begin{ques}
What can one say about 
$$
f_i(x)=\frac{1}{\pi (x)}\#\{p\leq x: \Delta_p=i\} ?
$$
Does the limit $\lim_{x \to \infty} f_i(x)$ exist?
\end{ques}

\begin{ques} Examine whether $\Delta_p$ is bounded or not.
\end{ques}

\begin{ques} Examine whether $w_p \leq 2$ for most primes, that is, whether
$$
\lim_{x \to \infty} 
\frac{1}{\pi(x)} \sum_{ p \leq x} w_p = \frac{3}{2}
$$ 
holds.
\end{ques}

There are several unrelated results  
estimating the distance in Hamming metric between some other number theoretic 
objects, such as reduced residues modulo a composite number~\cite{BaSh}
and primes, smooth and other special 
integers~\cite{Bourg,GraShp,HarKat,Shp1,Shp2,Tao}, or other additive properties
of the set of quadratic residues or primitive roots \cite{DartygeandSarkozy,Sarkozy}.
These new and exciting directions definitely deserve more attention.

\section*{Acknowledgement} 

During the preparation of this paper, 
R.~Dietmann was supported by EPSRC Grant EP/I018824/1,
C. Elsholtz by FWF-DK Discrete Mathematics Project W1230-N13,
and I.~Shparlinski by ARC Grant DP1092835. 

Finally, we would like to thank the referee for carefully reading the manuscript.

\end{document}